\theoremstyle{plain}
\newtheorem{thm}{Theorem}
  \theoremstyle{plain}
  \newtheorem{prop}[thm]{Proposition}
  \theoremstyle{plain}
  \newtheorem{cor}[thm]{Corollary}
  \theoremstyle{remark}
  \newtheorem{rem}[thm]{Remark}
\subjclass[2010]{16P10, 16D20, 16D25}
\begin{document}

\title[Wedderburn-Malcev decomposition ]{Wedderburn-Malcev decomposition of one-sided ideals of finite dimensional
algebras}

\author{A.A. Baranov}

\thanks{Supported by University of Leicester}

\address{Department of Mathematics, University of Leicester, Leicester, LE1
7RH, UK}

\email{ab155@le.ac.uk}

\author{A. Mudrov}

\thanks{Supported by University of Leicester}

\address{Department of Mathematics, University of Leicester, Leicester, LE1
7RH, UK}

\email{am405@le.ac.uk}

\author{H.M. Shlaka}

\thanks{Supported by the Higher Committee for Educational Development in
Iraq (HCED Iraq). }

\address{Department of Mathematics, University of Leicester, Leicester, LE1
7RH, UK; Department of Material Engineering, University of Kufa, Al-Najaf,
Iraq}

\email{hmass2@le.ac.uk, hasan.shlaka@uokufa.edu.iq}
\begin{abstract}
Let $A$ be a finite dimensional associative algebra over a perfect
field and let $R$ be the radical of $A$. We show that for every
one-sided ideal $I$ of $A$ there exists a semisimple subalgebra
$S$ of $A$ such that $I=I_{S}\oplus I_{R}$ where $I_{S}=I\cap S$
and $I_{R}=I\cap R$. 
\end{abstract}

\maketitle
Wedderburn-Malcev Principal Theorem is one of the landmarks in the
theory of associative algebras.
\begin{thm}[Wedderburn, Malcev]
 Let $A$ be a finite dimensional associative algebra and let $R$
be the radical of $A$. Suppose that $A/R$ is separable. Then the
following hold.

(1) There exists a semisimple subalgebra $S$ of $A$ such that $A=S\oplus R$
(Wedderburn). 

(2) If $Q$ is a semisimple subalgebra of $A$ then there exists $r\in R$
such that $Q\subseteq(1+r)S(1+r)^{-1}$ (Malcev).

(3) If $S_{1}$ and $S_{2}$ are two subalgebras of $A$ such that
$A=S_{i}\oplus R$ ($i=1,2$) then there exists $r\in R$ such that
$S_{1}=(1+r)S_{2}(1+r)^{-1}$ (Malcev). 
\end{thm}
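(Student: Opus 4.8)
The plan is to derive all three parts from the vanishing of the low-degree Hochschild cohomology of the separable algebra $\bar A:=A/R$ with coefficients in $R$, reducing in each case by induction on the index of nilpotency of $R$ to the situation $R^2=0$. Throughout I write $\pi\colon A\to\bar A$ for the canonical projection. When $R^2=0$, the radical $R$ becomes a genuine $\bar A$-bimodule via $\bar a\cdot r\cdot\bar b:=\tilde a\,r\,\tilde b$ for any lifts $\tilde a,\tilde b$ of $\bar a,\bar b$ (well defined precisely because $R^2=0$), and separability of $\bar A$ gives $H^n(\bar A,R)=0$ for all $n\ge 1$.

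First I would prove (1) in the base case $R^2=0$. Choosing any $k$-linear section $\sigma\colon\bar A\to A$ of $\pi$, the map $f(\bar a,\bar b):=\sigma(\bar a)\sigma(\bar b)-\sigma(\bar a\bar b)$ takes values in $R$ and is a Hochschild $2$-cocycle. Since $H^2(\bar A,R)=0$ it is a coboundary, so after correcting $\sigma$ by a linear map $\bar A\to R$ one obtains an algebra homomorphism $s\colon\bar A\to A$; then $S:=s(\bar A)$ is semisimple and $A=S\oplus R$. For the general step, set $N:=R^{\,m-1}$ where $R^m=0\ne R^{\,m-1}$, so $N^2=0$; applying the statement inductively to $A/N$ (whose radical $R/N$ has smaller index) produces a complement $\bar S$, and pulling back along $A\to A/N$ gives a subalgebra $B$ with radical $N$ and $B/N$ separable, to which the base case applies, yielding $S$ with $A=S\oplus R$.

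For (3) I would run the parallel argument one degree down. In the base case $R^2=0$, the two complements are the images of algebra homomorphisms $s_1,s_2\colon\bar A\to A$, and their difference $d:=s_1-s_2$ is a $1$-cocycle with values in $R$; as $H^1(\bar A,R)=0$ it is inner, $d(\bar a)=\bar a\cdot r-r\cdot\bar a$ for some $r\in R$. Because $R^2=0$ one has $(1+r)^{-1}=1-r$, and a short computation then gives $s_1(\bar a)=(1+r)\,s_2(\bar a)\,(1+r)^{-1}$, hence $S_1=(1+r)S_2(1+r)^{-1}$; the general case follows by the same $N$-filtration, conjugating first modulo $N$ and then correcting by an element of $N$. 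Statement (2) is handled similarly: a semisimple $Q$ satisfies $Q\cap R=0$, so $\pi$ embeds it as $\bar Q\subseteq\bar A$, and writing each $q\in Q$ as $s(\pi(q))+h(\pi(q))$ for the isomorphism $s\colon\bar A\to S$ exhibits $h$ as a $1$-cocycle on $\bar Q$ valued in $R$; since $\bar Q$ is separable this cocycle is inner, and the corresponding $1+r$ conjugates $S$ to a complement containing $Q$.

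I expect the main obstacle to be the inductive bookkeeping rather than the base cases: one must check that the pulled-back subalgebra $B$ really has radical $N$ with $B/N$ separable, and, most delicately in (2) and (3), that the conjugating element produced modulo $N$ can be lifted and combined with the base-case correction so that a single $1+r$ works. The separability hypothesis enters exactly at the points where a cocycle must be shown to be a coboundary, equivalently through the averaging furnished by a separability idempotent of $\bar A$; without it the relevant $H^1$ or $H^2$ need not vanish and the theorem fails.
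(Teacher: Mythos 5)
This is the one statement in the paper that has no proof in the paper: it is the classical Wedderburn--Malcev theorem, quoted as background with references to Curtis--Reiner, Behrens, and Malcev's original article, so there is nothing to compare your argument against line by line. On its own terms, your sketch is the standard cohomological proof and the overall architecture is sound: reduce to $R^{2}=0$ by a filtration argument, use $H^{2}(\bar A,R)=0$ to split a lifted multiplication defect for (1), and $H^{1}(\bar A,R)=0$ to make the difference of two sections an inner derivation for (3), with the conjugating elements composed across the filtration via $(1+r')(1+n)=1+r''$. Two points deserve more care than your sketch gives them. First, in (2) the vanishing of $H^{1}$ is applied to the subalgebra $\bar Q$, not to $\bar A$, so you need $Q$ itself to be \emph{separable}, not merely semisimple; a semisimple subalgebra of a separable algebra need not be separable over an imperfect field, and this is exactly why the paper immediately restricts to perfect base fields, where the distinction disappears. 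Second, the paper stresses that $A$ need not be unital: then $1+r$ is only a formal notation, and $R$ need not be a \emph{unital} $\bar A$-bimodule, whereas the vanishing of Hochschild cohomology for separable algebras is usually stated for unital bimodules; one must either pass to the unitalization or split off the degenerate Peirce components of $R$ before quoting $H^{n}(\bar A,R)=0$. Neither issue is fatal, but both need to be addressed for the proof to cover the theorem as stated.
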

In this note we show that for every one-sided ideal $I$ of $A$ there
exists a Wedderburn-Malcev decomposition $S\oplus R$ of $A$ which
\emph{splits} $I$, i.e. $I=I_{S}\oplus I_{R}$, where $I_{S}=I\cap S$
and $I_{R}=I\cap R$. 

Note that the algebra $A$ in Wedderburn-Malcev theorem is not required
to be unital. If $A$ doesn't contain $1$ then the conjugation $s\mapsto(1+r)s(1+r)^{-1}$
should be rewritten in the obvious way: by expanding $(1+r)^{-1}=1-r+r^{2}-r^{3}+\dots$
(since $r$ is nilpotent the sum is finite) we get $(1+r)s(1+r)^{-1}=s+rs-sr-rsr+\dots$.
Although most of the books state this theorem for unital algebras
only (e.g. \cite{CR}), one can find it in a non-unital context as
well (as in the original papers of Wedderburn and Malcev), see, for
example, \cite{B}. The part (2) of the theorem is not normally mentioned
in the books so we refer the reader to the original Malcev's paper
for the proof \cite{M}. Note that if $A$ is a finite dimensional
algebra over a perfect field then $A/R$ is always separable, so the
theorem holds in this case. 

For the sake of greater generality some of the results below are stated
for the Artinian rings. Although every finite dimensional unital algebra
is Artinian as a ring, this is not true for non-unital algebras (for
example, any one dimensional algebra over $\mathbb{Q}$ with zero
multiplication is not Artinian as a ring). However, as usual, it is
easy to see that the results below hold for the finite dimensional
algebras as well. 

Throughout this paper, $A$ is a (non-unital) associative left Artinian
ring or a finite dimensional algebra. We denote by $R$ its radical.
If $V$ is a subset of $A$ we denote by $\bar{V}$ its image in $\bar{A}=A/R$.
Let $I$ be a left ideal of $A$ and let $Q$ be a left ideal of $\bar{A}$.
We say that $I$ is $Q$-\emph{minimal} (or simply \emph{bar-minimal})
if $\bar{I}=Q$ and for every left ideal $J$ of $A$ with $J\subseteq I$
and $\bar{J}=Q$ one has $J=I$. 

It is well known that every non-nilpotent left ideal of a left Artinian
ring contains an idempotent, see for example \cite[Theorem 24.2]{CR}.
We need a bit more precise version of this fact. 
\begin{prop}
\label{thm:I=00003DAe} Let $A$ be a left Artinian associative ring
and let $I$ be a left ideal of $A$. Suppose that $I$ is bar-minimal.
Then there is an idempotent $e\in I$ such that $I=Ae$. 
\end{prop}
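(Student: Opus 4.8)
The plan is to push the problem into the semisimple quotient $\bar{A}=A/R$, produce there a single idempotent generating $Q=\bar{I}$, and then lift it back into $I$, exploiting that $R$ is nilpotent and that one can work inside the (non-unital) subring generated by a single element. First I would dispose of the degenerate case $Q=0$: here $I\subseteq R$, and since $J=0$ is a left ideal contained in $I$ with $\bar{J}=0=Q$, bar-minimality forces $I=0$, so $e=0$ does the job. Henceforth assume $Q\neq0$; then $I\not\subseteq R$, so $I$ is not nilpotent and the cited classical result already furnishes idempotents in $I$ — but I need a carefully chosen one.

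Next I would exploit semisimplicity. Since $A$ is left Artinian, $\bar{A}$ is semisimple Artinian and hence has an identity, so the left ideal $Q$ is a direct summand of $\bar{A}$ as a left module. Projecting the identity onto $Q$ produces an idempotent $g\in Q$ with $Q=\bar{A}g$. Because $I$ surjects onto $\bar{I}=Q$, I can choose $x\in I$ with $\bar{x}=g$; then $\overline{x^{2}-x}=g^{2}-g=0$, i.e.\ $x^{2}-x\in R$.

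The heart of the argument is to lift $x$ to an idempotent that remains inside $I$, and here the non-unital viewpoint is actually an advantage. Since $I$ is a left ideal containing $x$, every power $x^{k}=x\cdot x^{k-1}$ with $k\geq1$ lies in $A\cdot I\subseteq I$, hence so does every polynomial in $x$ with zero constant term. I would then run the Newton-type iteration $x_{0}=x$, $x_{k+1}=3x_{k}^{2}-2x_{k}^{3}$. Each $x_{k}$ is a polynomial in $x$ with zero constant term, so $x_{k}\in I$, and $\bar{x}_{k}=g$ for all $k$ because $g$ is idempotent. The polynomial identity $f(t)^{2}-f(t)=-t^{2}(t-1)^{2}(3-2t)(2t+1)$ for $f(t)=3t^{2}-2t^{3}$ shows $x_{k+1}^{2}-x_{k+1}\in(x_{k}^{2}-x_{k})^{2}\,\mathbb{Z}[x_{k}]$, whence $x_{k}^{2}-x_{k}\in R^{2^{k}}$; as $R$ is nilpotent, for large $k$ the element $e:=x_{k}$ is a genuine idempotent with $e\in I$ and $\bar{e}=g$.

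Finally I would conclude by bar-minimality. The left ideal $Ae$ satisfies $Ae\subseteq I$ (since $e\in I$) and $\overline{Ae}=\bar{A}\,\bar{e}=\bar{A}g=Q=\bar{I}$; bar-minimality of $I$ then forces $Ae=I$, which is exactly the claim. I expect the lifting step to be the only real obstacle: the usual idempotent-lifting lemmas place the idempotent in a unital subring, whereas here it must be kept inside the ideal $I$, and that is precisely what the zero-constant-term iteration guarantees.
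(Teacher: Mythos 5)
Your proof is correct, but it takes a genuinely different route from the paper's. The paper likewise reduces to a nonzero idempotent generator $f$ of $\bar{I}$ and a preimage $x\in I$, but instead of lifting $x$ by successive approximation it runs a Fitting-type argument: right multiplication $\varphi_{x}\colon y\mapsto yx$ is an endomorphism of the left $A$-module $I$ whose image $Ix$ is a left ideal with $\overline{Ix}=(\bar{A}f)f=\bar{I}$, so bar-minimality makes $\varphi_{x}$ surjective, Artinianness of $I$ makes it bijective, and the element $e$ with $ex=x$ is then shown to be an idempotent with $\bar{e}=f$, whence $I=Ae$ by bar-minimality again. Your argument uses bar-minimality only once, at the very end, and isolates a self-contained lifting lemma: any $x\in I$ with $\bar{x}$ idempotent can be replaced by a genuine idempotent $e\in I$ with $\bar{e}=\bar{x}$, because the iterates $x_{k+1}=3x_{k}^{2}-2x_{k}^{3}$ are zero-constant-term polynomials in $x$ (hence stay in the left ideal $I$) and satisfy $x_{k}^{2}-x_{k}\in R^{2^{k}}$ by your factorisation of $f(t)^{2}-f(t)$, which terminates because the radical of a left Artinian ring is nilpotent. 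The paper's route avoids the polynomial computation and does not invoke nilpotence of $R$ in this proposition, needing only that $I$ is an Artinian module; yours buys a reusable statement (idempotents of $\bar{A}$ lying in $\bar{I}$ lift to idempotents inside $I$) that makes no reference to bar-minimality. The one point worth flagging is that the cofactor $4t^{2}-4t-3$ of $(t^{2}-t)^{2}$ in $f(t)^{2}-f(t)$ has a nonzero constant term, so in the non-unital setting $(x_{k}^{2}-x_{k})^{2}\,\mathbb{Z}[x_{k}]$ should be read as integer combinations of $(x_{k}^{2}-x_{k})^{2}$ and its products with powers of $x_{k}$; all of these still lie in the two-sided ideal $R^{2^{k+1}}$, so the induction goes through unchanged.
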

\begin{proof}
We can assume that $\bar{I}$ is non-zero (otherwise $I=0$ is generated
by the idempotent $0$). Since $\bar{A}$ is semisimple, there is
a non-zero idempotent $f\in\bar{I}$ such that $\bar{I}=\bar{A}f$.
Fix any $x\in I$ such that $\bar{x}=f$. Note that $x^{2}\neq0$
because $\bar{x}^{2}=f^{2}=f\neq0$. Consider the map $\varphi_{x}:I\rightarrow I$
defined by $\varphi_{x}\left(y\right)=yx$ for all $y\in I$. Then
$\varphi_{x}$ is an endomorphism of the left $A$-module $I$. Note
that the image $J=\varphi_{x}(I)=Ix$ is a left ideal of $A$. We
have 
\[
\bar{J}=\overline{Ix}=\bar{I}\bar{x}=(\bar{A}f)f=\bar{A}f=\bar{I}.
\]
Since $J\subseteq I$ and $I$ is $\bar{I}$-minimal, we get $I=J=Ix$.
Thus, $\varphi_{x}$ is surjective. Since $I$ is an Artinian module,
$\varphi_{x}$ is an automorphism. Hence, there is an element $e\in I$
such that $x=\varphi_{x}\left(e\right)=ex$. We have 
\[
\varphi_{x}\left(e^{2}\right)=e^{2}x=e\left(ex\right)=ex=\varphi_{x}\left(e\right),
\]
so 
\[
\varphi_{x}\left(e^{2}-e\right)=\varphi_{x}\left(e^{2}\right)-\varphi_{x}\left(e\right)=0
\]
Since $\varphi_{x}$ is injective, $e^{2}-e=0$, so $e$ is an idempotent.
As $\bar{e}\in\bar{I}=\bar{A}f$, we have $\bar{e}=\bar{a}f$ for
some $\bar{a}\in\bar{A}$. Thus, $\bar{e}f=\bar{a}ff=\bar{a}f=\bar{e}$.
On the other hand, we have $ex=x$, so $\bar{e}f=\bar{e}\bar{x}=\bar{x}=f$.
Hence $\bar{e}=f=\bar{x}$. Therefore, 
\[
\overline{Ae}=\bar{A}\bar{e}=\bar{A}f=\bar{I}.
\]
 Since $Ae\subseteq I$ is a left ideal of $A$ with $\overline{Ae}=\bar{I}$
and $I$ is $\bar{I}$-minimal, we must have $I=Ae$, as required. 
\end{proof}
As a corollary we get the following simple fact, which is frequently
mentioned in the textbooks. 
\begin{cor}
\label{cor:I=00003DAe in ass(A)} Let $A$ be a left Artinian ring
and let $I$ be a minimal non-nilpotent left ideal of $A$. Then $I=Ae$
for some idempotent $e\in I$. 
\end{cor}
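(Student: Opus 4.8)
The plan is to reduce the corollary to Proposition \ref{thm:I=00003DAe} by showing that a minimal non-nilpotent left ideal is automatically bar-minimal, and then quoting the proposition directly. The bridge between the two notions is the standard fact that in a left Artinian ring a left ideal $I$ is nilpotent if and only if $\bar{I}=0$. This rests on the Artinian hypothesis: the radical $R$ is nilpotent and contains every nilpotent left ideal, so on the one hand $I\subseteq R$ forces $I^{n}\subseteq R^{n}=0$ for suitable $n$, and on the other hand any nilpotent left ideal lies in $R$ and hence has zero image in $\bar{A}$. I expect this equivalence to be the only point requiring a word of justification; everything else is just unwinding definitions.

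With this equivalence in hand, the first step is to note that since $I$ is non-nilpotent we have $\bar{I}\neq0$, so $Q:=\bar{I}$ is a (non-zero) left ideal of $\bar{A}$. The second step is to verify bar-minimality of $I$ directly from the definition given in the excerpt. So suppose $J$ is a left ideal of $A$ with $J\subseteq I$ and $\bar{J}=\bar{I}$. Then $\bar{J}=\bar{I}\neq0$, and by the equivalence above this means $J$ is non-nilpotent. Since $I$ is minimal among non-nilpotent left ideals and $J\subseteq I$ is itself a non-nilpotent left ideal, minimality forces $J=I$. This is precisely the statement that $I$ is $\bar{I}$-minimal, i.e.\ bar-minimal.

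Finally, Proposition \ref{thm:I=00003DAe} applies verbatim to the bar-minimal ideal $I$ and yields an idempotent $e\in I$ with $I=Ae$, which is the assertion of the corollary. I do not anticipate any serious obstacle: the whole argument is a short logical passage showing that the hypothesis ``minimal non-nilpotent'' is a special case of ``bar-minimal'', with the nilpotent-versus-radical equivalence being the only substantive ingredient.
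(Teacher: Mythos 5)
Your proposal is correct and follows essentially the same route as the paper: both arguments show that a minimal non-nilpotent left ideal is bar-minimal (using that nilpotence of a left ideal in a left Artinian ring is equivalent to vanishing of its image in $\bar{A}$) and then invoke Proposition \ref{thm:I=00003DAe}. No issues.
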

\begin{proof}
Since $I$ is non-nilpotent, $I\not\subseteq R$, so $\bar{I}$ is
a non-zero left ideal of $\bar{A}$. Let $J$ be a left ideal of $A$
such that $J\subseteq I$ and $\bar{J}=\bar{I}$. Then $J$ is non-nilpotent
(because $\bar{J}=\bar{I}$ is non-nilpotent). Since $I$ is minimal
non-nilpotent, $J=I$. This implies that $I$ is $\bar{I}$-minimal.
Therefore, by Proposition \ref{thm:I=00003DAe}, there is an idempotent
$e\in I$ such that $I=Ae$. 
\end{proof}
The following theorem gives a complete characterisation of bar-minimal
left ideals of left Artinian associative rings. 
\begin{thm}
Let $A$ be a left Artinian associative ring and let $I$ be a left
ideal of $A$. Then $I$ is bar-minimal if and only if there is an
idempotent $e\in A$ such that $I=Ae$. 
\end{thm}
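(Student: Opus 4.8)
The plan is to prove the two implications separately. The forward direction is essentially a restatement of Proposition~\ref{thm:I=00003DAe}: if $I$ is bar-minimal, that proposition immediately produces an idempotent $e\in I\subseteq A$ with $I=Ae$, so nothing further is needed. All the work lies in the converse.

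For the converse, suppose $I=Ae$ with $e^{2}=e$. I would first record two elementary facts that make the argument run. Since $e=e^{2}\in Ae$ we have $e\in I$, and since every $x\in I$ has the form $x=ae$ we get $xe=ae^{2}=ae=x$; thus $e$ is a right identity on $I$. Taking $Q=\bar{I}$, I must show that any left ideal $J$ with $J\subseteq I$ and $\bar{J}=\bar{I}$ satisfies $J=I$. Because $\bar{e}\in\bar{I}=\bar{J}$, I can choose $j\in J$ with $\bar{j}=\bar{e}$ and set $r=j-e$, which lies in $R\cap I$. The central construction is the right-multiplication map $\psi\colon I\rightarrow I$, $\psi(x)=xj$. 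This lands in $I$ precisely because $j\in I=Ae$, so $xj\in Ae=I$; and it is visibly a left $A$-module endomorphism whose image $\psi(I)=Ij\subseteq Aj\subseteq J$ is contained in $J$.

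The heart of the argument is to show that $\psi$ is an automorphism of $I$. Using the right-identity property, $(\mathrm{id}-\psi)(x)=x-xj=xe-xj=x(e-j)=-xr$, and since $-xr\in I$ one may iterate to obtain $(\mathrm{id}-\psi)^{k}(x)=(-1)^{k}xr^{k}$. The radical of a left Artinian ring is nilpotent, so $r^{k}=0$ for $k$ large; hence $\mathrm{id}-\psi$ is nilpotent and $\psi=\mathrm{id}-(\mathrm{id}-\psi)$ is invertible in $\mathrm{End}_{A}(I)$, its inverse being the finite sum $\sum_{k\geq 0}(\mathrm{id}-\psi)^{k}$. Consequently $I=\psi(I)\subseteq J$, and together with $J\subseteq I$ this gives $J=I$, establishing bar-minimality.

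The step I expect to be the main obstacle, or at least the point most in need of care, is the verification underlying $\psi$: that right multiplication by $j$ genuinely maps $I$ into itself, and that $\mathrm{id}-\psi$ is honestly nilpotent rather than merely congruent to the identity modulo $R$. The first is handled by the cyclic presentation $I=Ae$, and the second by the explicit formula $(\mathrm{id}-\psi)^{k}(x)=(-1)^{k}xr^{k}$ together with nilpotency of $R$. Once these are in place the conclusion is purely formal, and in particular no appeal to Fitting's lemma or to finite length of $I$ is required.
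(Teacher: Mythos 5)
Your proof is correct, and the converse direction takes a genuinely different route from the paper's. The paper first passes to an $\bar{I}$-minimal left ideal $J\subseteq I$, invokes Proposition~\ref{thm:I=00003DAe} a second time to write $J=Ae_{1}$, forms the idempotent $e_{2}=ee_{1}e$, and then kills the discrepancy $r=e_{2}-e\in R$ by the lifting computation $r^{2}=-r$ together with nilpotency of $R$, concluding $e_{2}=e$ and hence $J=I$. You instead take an arbitrary $J\subseteq I$ with $\bar{J}=\bar{I}$, lift $\bar{e}$ to $j\in J$, and show that right multiplication by $j$ is an automorphism of $I$ because $\mathrm{id}-\psi$ acts as $x\mapsto -xr$ with $r=j-e\in R$, hence is nilpotent and $\psi$ is inverted by a finite geometric series; then $I=\psi(I)\subseteq J$. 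Both arguments ultimately rest on nilpotency of the radical, but yours avoids the reduction to a minimal $J$, avoids the second appeal to Proposition~\ref{thm:I=00003DAe}, and avoids the idempotent manipulations, so it is more self-contained and arguably more elementary. What the paper's route buys in exchange is explicit structural information along the way: it exhibits $J$ itself as $Ae_{1}$ for an idempotent $e_{1}$ right-equivalent to $e$, which feeds directly into the subsequent remark identifying bar-minimal left ideals with right-equivalence classes of idempotents. Your verification that $(\mathrm{id}-\psi)^{k}(x)=(-1)^{k}xr^{k}$ is sound because $\mathrm{id}-\psi$ preserves $I$ and the identity $(\mathrm{id}-\psi)(y)=-yr$ holds for all $y\in I$ (using that $e$ is a right identity on $I=Ae$), so the induction closes; no further care is needed there.
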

\begin{proof}
The ``only if'' part is proved in Proposition \ref{thm:I=00003DAe}.
Suppose now that $I=Ae$, where $e$ is an idempotent in $A$. Let
$J\subseteq I$ be an $\bar{I}$-minimal left ideal of $A$. We need
to show that $J=I$. By Theorem \ref{thm:I=00003DAe}, $J=Ae_{1}$
for some idempotent $e_{1}$ in $A$. Note that 
\[
e_{1}=e_{1}e_{1}\in Ae_{1}=J\subseteq I=Ae,
\]
so $e_{1}e=e_{1}$. Let $e_{2}=ee_{1}e=ee_{1}$. Then $e_{2}$ is
an idempotent. Indeed,
\[
e_{2}^{2}=ee_{1}eee_{1}e=ee_{1}e_{1}e=ee_{1}e=e_{2}.
\]

We have $Ae_{2}=Aee_{1}\subseteq Ae_{1}\subseteq Ae$. We claim that
$Ae_{2}=Ae_{1}=J$. Since $J$ is $\bar{I}$-minimal, it is enough
to show that $\bar{A}\bar{e}_{2}=\bar{A}\bar{e}=\bar{I}$. As $\bar{A}\bar{e}_{1}=\bar{I}=\bar{A}\bar{e}$,
we have $\bar{e}=\bar{e}\bar{e}\in\bar{A}\bar{e}_{1}$, so $\bar{e}\bar{e}_{1}=\bar{e}$.
Therefore, $\bar{A}\bar{e}_{2}=\bar{A}\bar{e}\bar{e}_{1}=\bar{A}\bar{e}$,
as required.

We are going to show that $e=e_{2}$. Since $\bar{e}=\bar{e}\bar{e}\in\bar{A}\bar{e}=\bar{A}\bar{e}_{2}$,
we have $\bar{e}\bar{e}_{2}=\bar{e}$. Recall that $e_{2}=ee_{1}$,
so 
\[
\bar{e}=\bar{e}\bar{e}_{2}=\bar{e}\bar{e}\bar{e}_{1}=\bar{e}\bar{e}_{1}=\bar{e}_{2}.
\]
Therefore, there is $r\in R$ such that $e_{2}=e+r$. Since $e_{2}=ee_{1}e$,
we have $e_{2}=ee_{2}=e_{2}e$. Hence
\[
e+r=e_{2}=ee_{2}=e(e+r)=e+er,
\]
so $er=r$. Similarly, $re=r$. Thus, 
\[
e+r=e_{2}=e_{2}^{2}=(e+r)^{2}=e+2r+r^{2}.
\]
Therefore, $r^{2}=-r$ and $r^{2^{k}}=-r$ for all $k\in\mathbb{N}$.
Since $R$ is nilpotent, we get $r=0$, so $e_{2}=e$. Therefore,
$I=Ae=Ae_{2}=Ae_{1}=J$, as required.  
\end{proof}
\begin{rem}
Let $e_{1}$ and $e_{2}$ be idempotents of $A$. They are said to
be \emph{right equivalent} if $e_{1}e_{2}=e_{1}$ and $e_{2}e_{1}=e_{2}$.
It is easy to see that $Ae_{1}=Ae_{2}$ if and only if $e_{1}$ and
$e_{2}$ are right equivalent. Thus, the bar-minimal left ideals of
$A$ are in bijective correspondence with the right equivalence classes
of the idempotents of $A$. 
\end{rem}
We are now ready to prove our main result. 
\begin{thm}
Let $A$ be a finite dimensional algebra and let $I$ be a left ideal
of $A$. Suppose that $A/R$ is separable. Then there exists a semisimple
subalgebra $S$ of $A$ such that $A=S\oplus R$ and $I=I_{S}\oplus I_{R}$,
where $I_{S}=I\cap S$ and $I_{R}=I\cap R$. 
\end{thm}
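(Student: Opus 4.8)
The plan is to reduce the problem to producing a single idempotent $e\in I$ that carries the semisimple part of $I$, and then to use Malcev's conjugation result (part (2) of the Wedderburn--Malcev theorem) to slide a Wedderburn complement so that it contains $e$. First I would dispose of the degenerate case $\bar{I}=0$, i.e. $I\subseteq R$: here any Wedderburn complement $S$ (which exists since $\bar{A}$ is separable) satisfies $I_{S}=I\cap S\subseteq R\cap S=0$, so $I=I_{R}$ and we are done. So from now on assume $\bar{I}\neq0$.

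Next I would extract the idempotent. Among all left ideals $J\subseteq I$ with $\bar{J}=\bar{I}$ (the collection is nonempty, as $I$ itself qualifies), the Artinian hypothesis lets me choose a minimal one; this $J$ is bar-minimal by construction, so by Proposition~\ref{thm:I=00003DAe} there is an idempotent $e\in J\subseteq I$ with $J=Ae$. Thus $Ae\subseteq I$ and $\overline{Ae}=\bar{A}\bar{e}=\bar{I}$.

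The key step, and the place where I expect the real work to sit, is to arrange a Wedderburn complement containing this $e$. The one-dimensional subalgebra $ke$ is semisimple, so Wedderburn part (1) gives a complement $S_{0}$ with $A=S_{0}\oplus R$, and Malcev part (2) produces $r\in R$ with $ke\subseteq(1+r)S_{0}(1+r)^{-1}=:S$. Since conjugation by $1+r$ is an automorphism of $A$ fixing $R$ setwise, $A=S\oplus R$ is again a Wedderburn decomposition, and now $e\in S$.

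Finally I would verify the splitting. Because $S\subseteq A$ we have $Se\subseteq Ae\subseteq I$, and $Se\subseteq S$, so $Se\subseteq I\cap S=I_{S}$; moreover $\overline{Se}=\bar{A}\bar{e}=\bar{I}$. The sum $I_{S}+I_{R}$ is automatically direct since $I_{S}\cap I_{R}\subseteq S\cap R=0$, so it remains to show $I\subseteq I_{S}+I_{R}$. Given $x\in I$, note $\bar{x}\in\bar{I}=\overline{Se}$, hence there is $s\in Se\subseteq I_{S}$ with $\bar{s}=\bar{x}$; then $x-s\in R\cap I=I_{R}$, whence $x=s+(x-s)\in I_{S}\oplus I_{R}$. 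Thus $I=I_{S}\oplus I_{R}$, as required. The main obstacle is the middle step: without Malcev's theorem there is no reason for a given Wedderburn complement to be compatible with $I$, and it is precisely the freedom to conjugate by $1+r$ that lets us absorb $e$ into $S$.
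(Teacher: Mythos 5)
Your proof is correct and follows essentially the same route as the paper: pass to a bar-minimal left ideal $J\subseteq I$ with $\bar{J}=\bar{I}$, extract an idempotent $e$ with $J=Ae$ via Proposition~\ref{thm:I=00003DAe}, place $e$ inside a Wedderburn complement $S$ using Malcev's conjugation statement, and then check $I=I_S\oplus I_R$ by comparing images modulo $R$. The only cosmetic differences are that you spell out the application of part (2) to the subalgebra generated by $e$ (which the paper leaves implicit) and verify the final splitting elementwise rather than by the isomorphism $I/I_R\cong\bar{I}_S\cong I_S$.
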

\begin{proof}
If $I$ is nilpotent, then $I\subseteq R$, so $I=I_{R}$ as required.
Suppose that $I$ is non-nilpotent. Then $\bar{I}$ is a non-zero
left ideal of $\bar{A}$. Let $J$ be a minimal left ideal of $A$
such that $J\subseteq I$ and $\bar{J}=\bar{I}$. Then by Proposition
\ref{thm:I=00003DAe}, $J=Ae$ for some idempotent $e\in J$. By Wedderburn-Malcev
Theorem, there is a semisimple subalgebra $S$ of $A$ such that $e\in S$
and $A=S\oplus R$. We have 
\[
J=Ae=\left(S\oplus R\right)e=Se\oplus Re=J_{S}\oplus J_{R}
\]
where $J_{S}=Se$ and $J_{R}=Re$. Note that $J_{S}=J\cap S$ (because
$e\in S$) and $J_{R}=J\cap R$. Put $I_{S}=I\cap S$ and $I_{R}=I\cap R$.
Then the sum $I_{S}+I_{R}\subseteq I$ is direct. We have $J_{S}\subseteq I_{S}$
and 
\[
\bar{J}_{S}\subseteq\bar{I}_{S}\subseteq\bar{I}=\bar{J}=\bar{J}_{S}.
\]
This implies $I/I_{R}\cong\bar{I}=\bar{I}_{S}\cong I_{S}$, so $I=I_{S}\oplus I_{R}$,
as required. 
\end{proof}

\end{document}